\newcommand{\NN}{\mathbb N}
\newcommand{\FF}{\mathbb F}
\newcommand{\PP}{\mathbb P}
\newcommand{\ZZ}{\mathbb Z}
\newcommand{\RR}{\mathbb R}
\newcommand{\Scal}{\mathcal{S}}
\newcommand{\bsx}{\boldsymbol{x}}
\newtheorem{theorem}{Theorem}
\newtheorem{definition}{Definition}
\newtheorem{proposition}{Proposition}
\newtheorem{remark}{Remark}
\newtheorem{lemma}{Lemma}
\newtheorem{op}{Open Problem}
\newcommand{\notiz}[1]{}
\begin{document}

\title[Disproving quasi-uniformity of Halton sequences]{Disproving the quasi-uniformity of the Halton sequences and of some Halton-type sequences}
\author[T.~Goda]{Takashi Goda}
\address[T.~Goda]{Graduate School of Engineering, The University of Tokyo, 7-3-1 Hongo, Bunkyo-ku, Tokyo 113-8656,
Japan} 
\email[]{goda@frcer.t.u-tokyo.ac.jp}
\thanks{The work of T.G.\ is supported by JSPS KAKENHI Grant Number 23K03210.}

\author[R.~Hofer]{Roswitha Hofer}
\address[R.~Hofer]{Institute of Financial Mathematics and Applied Number Theory, Johannes Kepler University Linz, Altenbergerstr.69, 4040 Linz, Austria}
\email[]{roswitha.hofer@jku.at}
\thanks{}

\author[K.~Suzuki]{Kosuke Suzuki}
\address[K.~Suzuki]{Faculty of Science, Yamagata University, 1-4-12 Kojirakawa-machi, Yamagata, 990-8560, Japan}
\email[]{kosuke-suzuki@sci.kj.yamagata-u.ac.jp}
\thanks{The work of K.S.\ is supported by JSPS KAKENHI Grant Number 24K06857.}

\date{\today}

\subjclass{11K36, 52C17, 65D12}
\keywords{quasi-uniformity, separation radius, covering radius, Halton sequence}

\begin{abstract}
    In this short article, we prove that the Halton sequence, one of the most well-known low-discrepancy sequences, is not quasi-uniform in any dimension $d \ge 2$ with any pairwise relatively prime bases. We further disprove the quasi-uniformity of some Halton-type sequences, including the $p$-dimensional Faure sequence in base $p$, $p\in\PP$, which provides an alternative proof of the known results.
\end{abstract}

\maketitle

%\sloppy

\section{Introduction}\label{sec:intro}

The Halton sequence is evidently one of the most well-known low-discrepancy sequences in the unit cube $[0,1]^d$ for any dimension $d\ge 1$ \cite{LP14,Nie92}. 
Throughout this paper, for an integer $b\ge 2$, we abbreviate the base-$b$ representation of $n=n_0+n_1b+n_2b^2+\cdots\in \NN_0$ as $(n_0,n_1,n_2,\ldots)_b$, where $\NN_0$ denotes the set of non-negative integers and $n_0,n_1,\ldots\in\{0,1,\ldots,b-1\}$.
The radical inverse function $\varphi_b$ in base $b$ is then defined by 
\[ \varphi_b:\NN_0\to[0,1),\,n=(n_0,n_1,n_2,\ldots)_b\mapsto \frac{n_0}{b}+\frac{n_1}{b^2}+\frac{n_2}{b^3}+\cdots. \]
For pairwise relatively prime integers $b_1,\ldots,b_d\ge 2$, the corresponding $d$-dimensional Halton sequence is defined as the sequence $\{\bsx_0,\bsx_1,\ldots\}$ with
\[ \bsx_n = \left( \varphi_{b_1}(n),\ldots,\varphi_{b_d}(n)\right)\in [0,1]^d. \]
In the case $d=1$, this construction is nothing but the renowned van der Corput sequence. 

The discrepancy of the Halton sequence was studied already by Halton himself in \cite{Hal60}, who proved an upper bound of $\mathcal{O}(N^{-1}(\log N)^d)$ for the star discrepancy of the initial $N$ points of the sequence. Much later, Atanassov \cite{Ata04} provided a refined analysis, which led to a substantial improvement of the implied constant in this $\mathcal{O}$-bound. While Halton's original constant grows super-exponentially fast in the dimension $d$, Atanassov's constant decreases to $0$ super-exponentially. In \cite{Le16a,Le16b}, Levin proved a matching-order lower bound for the star discrepancy of the Halton sequence. Moreover, the tractability property of the weighted star discrepancy of Halton sequences has also been studied in \cite{HPT19}.

With these results, together with the classical Koksma--Hlawka inequality \cite[Chapter~2]{KN74}, the use of Halton sequences in high-dimensional numerical integration is theoretically well justified. However, Halton sequences have also been employed as sampling nodes for scattered data approximation---particularly in the context of radial basis function approximation (see, for instance, \cite{FM12,MS17,SSW23}). In such applications, not the low discrepancy but rather the geometric properties of the point set, such as the coverage of the underlying domain and the spacing between points, are crucial, as they contribute to bounds on the approximation error and to numerical stability \cite{DS10,S95,W05,WSH21}. These geometric properties can be quantified in terms of the \emph{covering radius} and the \emph{separation radius} of the point set, naturally leading to the notion of quasi-uniformity.

Let $\Scal = \{ \bsx_0, \bsx_1, \ldots \}$ be an infinite sequence of points in $[0,1]^d$, and let $P_N = \{ \bsx_0, \ldots, \bsx_{N-1} \}$ denote the set of the first $N$ points of $\Scal$. The covering and separation radii of $P_N$ are then defined by
\[ h(P_N):=\sup_{\bsx\in [0,1]^d}\min_{0\le i<N}\|\bsx-\bsx_i\|, \]
and
\[ q(P_N) := \frac{1}{2}\min_{0\le i<j<N}\|\bsx_i-\bsx_j\|,\]
respectively, where $\|\cdot\|$ denotes the Euclidean norm in $\RR^d$. The sequence $\Scal$ is said to be \emph{quasi-uniform} if there exists a constant $C>0$ such that $h(P_N)/q(P_N)\leq C$ for all $N\ge 2$. Assuming that the dimension $d$ is finite, the Euclidean norm may be replaced by the $\ell_p$ norm for any $p \in [1, \infty]$ without affecting the notion of quasi-uniformity.

A simple volumetric argument shows that there exist constants $C_h,C_q>0$ such that, for any $N$-element point set $P_N$, we have
\[ h(P_N)\ge C_hN^{-1/d}\quad \text{and}\quad q(P_N)\le C_q N^{-1/d}, \]
see, for instance, \cite[Appendix~A.1]{DGLPS25}.
Thus, the quasi-uniformity requires that both radii decay at the optimal rate of order $N^{-1/d}$.
As shown, for instance, in \cite[Theorem~6.12]{Nie92}, it is known that for $\Scal$ being the Halton sequence in any relatively prime bases $b_1, \ldots, b_d\ge 2$, 
\[
h(P_N) \le \frac{\sqrt{d}}{N^{1/d}} \max_{1 \le \ell \le d} b_{\ell},\quad \text{for all $N\ge 1$},
\]
which implies that the Halton sequence achieves the optimal rate for the covering radius. Therefore, whether the Halton sequence is quasi-uniform or not is determined by whether its separation radius decays no faster than $N^{-1/d}$.

Figure~\ref{fig:halton_separation} shows the separation radius of the Halton sequences up to $N=10^5$ in dimensions $d\in \{2,3,4,5\}$. Here, the bases $b_1,\ldots,b_d$ are chosen to be the first $d$ prime numbers. The reference lines of $N^{-1/d}$ are also plotted. It suggests that the separation radius decays faster than the optimal rate, implying that the Halton sequences are \emph{not} quasi-uniform. To the best of our knowledge, however, no theoretical result is available in the literature.  

\begin{figure}[t]
    \centering
    \includegraphics[width=0.7\linewidth]{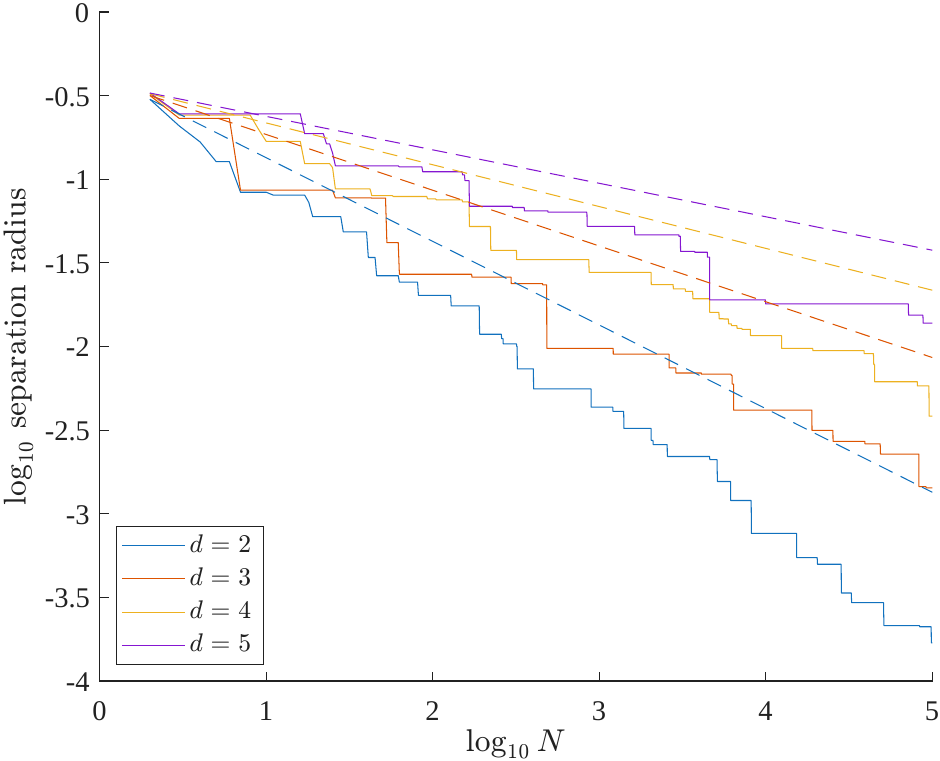}
    \caption{Separation radius of the Halton sequence from $N=2$ to $N=10^5$ in dimensions $2$ (blue), $3$ (red), $4$ (orange), and $5$ (purple). The corresponding reference lines $N^{-1/d}$ are also plotted in the same colors.}
    \label{fig:halton_separation}
\end{figure}

In passing, it is known that the first point $\bsx_0$, which lies at the origin, is well separated from the subsequent points of the Halton sequence \cite{HKZ05,Owe06}. Let $x_{i,\ell}$ denote the $\ell$-th coordinate of the $i$-th point $\bsx_i$. In the present context, this separation property can be rephrased as the existence of a constant $c_d > 0$, depending only on the bases $b_1,\ldots,b_d$, such that
\[ \min_{1\le i<N}\|\bsx_0-\bsx_i\|\ge \sqrt{d}\min_{1\le i<N}\prod_{\ell=1}^{d}x_{i,\ell}^{1/d}\ge \frac{c_d}{N^{1/d}},\quad \text{for all $N\ge 2$.}\]
Here, the first inequality follows from the arithmetic--geometric mean inequality, and the second inequality is from \cite[Theorem~3.1]{Owe06}.

One of the main purposes of this article is to disprove the quasi-uniformity of the Halton sequences in any dimension $d\ge 2$. This is done in Section~\ref{sec:Halton} by proving that the separation radius of the Halton sequence indeed decays faster than $N^{-1/d}$ for any $d\ge 2$ (Theorem~\ref{thm:halton_not_quasi-uniform}). Furthermore, in Section~\ref{sec:Halton-type}, we consider the \emph{Halton-type sequences} in the sense of Hofer \cite{Ho13,Ho18}, and show that some of those sequences are also not quasi-uniform. These include the $p$-dimensional Faure sequence in base $p$ with $p\in\PP$ \cite{Fa82}, which was already disproven to be quasi-uniform in \cite{DGS25}. Our purpose here is to provide an alternative proof from the Halton-type point of view.

\subsection{Some known results}
Before moving on, we summarize several known results about the quasi-uniformity of low-discrepancy point sets and sequences. 

The covering radius has been studied under the name of \emph{dispersion} \cite[Section~6]{Nie92}. It is known that certain classes of low-discrepancy constructions, such as $(t,d)$-sequences and the Halton sequence, achieve the optimal rate $N^{-1/d}$; see \cite[Theorems~6.11 \& 6.12]{Nie92}. 

The separation radius, however, has not been studied to the same extent, and it has therefore remained unclear whether common low-discrepancy constructions are quasi-uniform. This question was explicitly raised in \cite{WBG21}. In \cite{SS07}, Sobol' and Shukhman studied the separation radius of the Sobol' sequence \cite{So67} and, based on numerical experiments, suggested that it also attains the optimal rate $N^{-1/d}$ for the separation radius. In the case $d=2$, however, this prediction was disproven recently in \cite{G24,S25}. This shows that the Sobol' sequence is not quasi-uniform, at least in two dimensions, whereas the question remains open for $ d\ge 3$. 

Further examples of low-discrepancy point sets and sequences that are \emph{not} quasi-uniform can be found in \cite{DGS25}, including the $p$-dimensional Faure sequence in base $p$ with $p\in\PP$. Examples of low-discrepancy point sets and sequences that \emph{are} quasi-uniform are given in \cite{DGLPS25,DGS25,G24b}, in addition to the one-dimensional van der Corput sequence in any base $b$. The latter result means that any one-dimensional projection of the Halton sequence is quasi-uniform, whereas our main result below implies that no higher-dimensional projection satisfies this property.

\section{Halton sequences are not quasi-uniform}\label{sec:Halton}

Throughout this section, let $b_1,\ldots,b_d$ be pairwise relatively prime integers with $1<b_1 < \cdots < b_d$. We denote Euler's totient function of $b_1$ by $r$, i.e. $r=\phi(b_1)$.  
Note that then $b_j^r \equiv 1 \pmod{b_1}$ holds for $2 \le j \le d$ by Euler's Theorem. In this section, we prove the following result.

\begin{theorem}\label{thm:halton_not_quasi-uniform}
Let $d \ge 2$, and let $b_1, \dots, b_d$ be pairwise relatively prime integers with $1<b_1 < \cdots < b_d$.
Let $\Scal$ be the $d$-dimensional Halton sequence in bases $b_1,\dots,b_d$.
Then there exists a constant $c > 0$ such that, for infinitely many $N \in \NN$, we have
\[
q(P_N) \le \frac{c}{N^{1/d} (\log N)^{1/(d(d-1))}}.
\]
In particular, $\Scal$ is not quasi-uniform.
\end{theorem}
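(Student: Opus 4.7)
I will disprove quasi-uniformity by exhibiting, for infinitely many $N$, a pair of distinct indices $(n,m)\in\{0,\dots,N-1\}^2$ with $\|\bsx_n-\bsx_m\|=O(N^{-1/d}(\log N)^{-1/(d(d-1))})$; this immediately gives the claimed bound on $q(P_N)$ and the non-quasi-uniformity. The basic digit-agreement principle is the following: since the bases are pairwise coprime, the Chinese Remainder Theorem says that $b_\ell^{k_\ell}\mid(n-m)$ for every $\ell=1,\dots,d$ is equivalent to $D:=\prod_\ell b_\ell^{k_\ell}\mid(n-m)$, and in that case $n$ and $m$ coincide on their first $k_\ell$ base-$b_\ell$ digits, so that $|\varphi_{b_\ell}(n)-\varphi_{b_\ell}(m)|<b_\ell^{-k_\ell}$ and hence
\[
\|\bsx_n-\bsx_m\|\le\sqrt d\,\max_{\ell}b_\ell^{-k_\ell}.
\]
Used alone, this bound only recovers the optimal rate $N^{-1/d}$, since $D\le N$ forces $\max_\ell b_\ell^{-k_\ell}\ge D^{-1/d}\ge N^{-1/d}$; the logarithmic improvement must come from (a) \emph{balancing} the $b_\ell^{k_\ell}$ very precisely and (b) squeezing out extra digit cancellation via the Euler congruence $b_j^r\equiv 1\pmod{b_1}$.

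\medskip
\noindent\textbf{Simultaneous Dirichlet approximation.} Because $\log b_1,\dots,\log b_d$ are $\mathbb Q$-linearly independent (they arise from pairwise coprime integers with distinct prime divisors), Dirichlet's theorem applied to the $(d-1)$-tuple $(\log b_1/\log b_\ell)_{\ell=2}^{d}$ yields, for every $Q\ge 1$, integers $1\le k_1\le Q^{d-1}$ and $k_2,\dots,k_d$ with
\[
\bigl|k_\ell\log b_\ell-k_1\log b_1\bigr|\le\frac{\log b_\ell}{Q}\qquad(\ell=2,\dots,d).
\]
Setting $M:=b_1^{k_1}$, this forces $b_\ell^{k_\ell}=M(1+O(1/Q))$ for every $\ell$, and hence $D=M^d(1+O(d/Q))$. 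Since $\log M\le Q^{d-1}\log b_1$, one may take $Q$ of order $(\log M)^{1/(d-1)}$, yielding relative error $\epsilon:=1/Q=O((\log M)^{-1/(d-1)})$. Scaling each $k_\ell$ by $r=\phi(b_1)$ (costing only a constant factor in $Q$) additionally secures $b_\ell^{k_\ell}\equiv 1\pmod{b_1}$ for all $\ell\ge 2$, so that $E_1:=D/b_1^{k_1}=\prod_{j\ne 1}b_j^{k_j}\equiv1\pmod{b_1}$, and by permuting the roles of the bases, analogous congruences hold in each coordinate.

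\medskip
\noindent\textbf{Construction of the close pair.} I take $N\asymp D$ and consider a pair of the form $(n,m)=(a+D,a)$ for an auxiliary index $a>0$ (positive, to stay away from the isolated point $\bsx_0$, and chosen so that the digits of $a$ in each base $b_\ell$ near position $k_\ell$ are small enough to avoid carry cascades). For coordinate $\ell$, the Euler-type congruence forces the ``leading'' digit of $E_\ell$ in base $b_\ell$ to equal $1$, so the direct expansion of $\varphi_{b_\ell}(a+D)-\varphi_{b_\ell}(a)$ has a controlled principal term $+b_\ell^{-k_\ell-1}$ together with tails whose size is sharpened by the Dirichlet-closeness $b_\ell^{k_\ell}=M(1+O(\epsilon))$. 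Combining this with the $d$-dimensional volume constraint $D\le N\asymp M^d$, one has to show that the balanced tuple gives $\min_\ell b_\ell^{k_\ell}\ge M(1-O(\epsilon))$, and that the coordinate-wise contributions combine multiplicatively against the constraint to produce an overall gain of order $\epsilon^{1/d}$ on the naive bound $\sqrt d/M$. Substituting $\epsilon=O((\log N)^{-1/(d-1)})$ yields $\|\bsx_n-\bsx_m\|\le c\,N^{-1/d}(\log N)^{-1/(d(d-1))}$, and letting $Q\to\infty$ produces infinitely many such $N$.

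\medskip
\noindent\textbf{Main obstacle.} The heart of the argument is not in setting up the Dirichlet approximation but in \emph{converting} the relative error $\epsilon\asymp(\log N)^{-1/(d-1)}$ of the simultaneous approximation into the \emph{gain} $\epsilon^{1/d}$ in the separation bound, rather than merely an error in the opposite direction. One must trace how the Dirichlet balancing interacts with the Euler-induced carry structure at each critical digit position, verify that no single coordinate spoils the bound, and confirm that the admissible shift $a$ can be chosen uniformly in $Q$ so that the resulting constant $c$ depends only on $d$ and $b_1,\dots,b_d$. Handling these competing multiplicative contributions across all $d$ coordinates, while staying compatible with the known isolation of $\bsx_0$, is the technical core of the proof.
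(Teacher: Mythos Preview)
Your proposal correctly isolates two of the ingredients the paper uses (Dirichlet simultaneous approximation among the $\log b_\ell$, and the Euler congruence $b_j^r\equiv 1\pmod{b_1}$), but it does not contain the mechanism that actually produces the logarithmic gain, and your candidate pair $(a,a+D)$ with $D=\prod_\ell b_\ell^{k_\ell}$ balanced to $b_\ell^{k_\ell}\approx M$ cannot produce it. With that choice one has $N\asymp D\asymp M^d$ and $|\varphi_{b_\ell}(a+D)-\varphi_{b_\ell}(a)|\le b_\ell^{-k_\ell}\asymp M^{-1}\asymp N^{-1/d}$; the Dirichlet error $\epsilon$ only enters the \emph{constants} in these estimates, not their order, so there is no ``conversion of $\epsilon$ into a gain $\epsilon^{1/d}$.'' Your ``Main obstacle'' paragraph is in effect an admission that this step is missing.

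What the paper does instead is to \emph{unbalance} coordinate~$1$ on purpose and then repair it by a carry cascade. One takes $m-n=(b_1+1)\,b_1^{\ell}\prod_{j\ge 2}b_j^{rc_jb_1^k}$, where the $c_j$ are the Dirichlet approximants (so $\prod_{j\ge 2}b_j^{rc_jb_1^k}\asymp M^{d-1}$) but $b_1^{\ell}\asymp M/b_1^{k}$, not $M$. Thus $m-n\asymp M^{d}/b_1^{k}$, so one may take $N\asymp M^{d}/b_1^{k}$, which is the source of the extra factor. Naively this would cost precision in coordinate~$1$ (only $\ell$ matching digits, giving error $\asymp b_1^{k}/M$). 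The repair is twofold: (i) a lifting-the-exponent lemma shows $b_1^{k+1}\mid\bigl(\prod_{j\ge2}b_j^{rc_j}\bigr)^{b_1^{k}}-1$, forcing the base-$b_1$ expansion of the increment to read $(\underbrace{0,\dots,0}_{\ell},1,1,0,\dots)$; and (ii) $n=(b_1^{k}-1)b_1^{\ell+1}$ is chosen so that its base-$b_1$ digits are $(\underbrace{0,\dots,0}_{\ell},0,\underbrace{b_1-1,\dots,b_1-1}_{k})$. Adding these triggers a length-$k$ carry, so $n$ and $m$ in fact agree on $\ell+k+1$ base-$b_1$ digits, restoring $|\varphi_{b_1}(n)-\varphi_{b_1}(m)|\lesssim 1/M$. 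Since $b_1^{k}\gtrsim(\log N)^{1/(d-1)}$ (from $c_2\le b_1^{k(d-2)}$), one gets $M^{d}\gtrsim N(\log N)^{1/(d-1)}$ and hence the stated bound. This carry-engineering step, together with the LTE-type divisibility, is the missing idea in your outline.
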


We require some preparation for the proof.

\begin{lemma}\label{lem:LTE}
Let $p,q \ge 2$ be integers such that $p \equiv 1 \pmod{q}$.
Then, for every $k\in\NN_0$, the number $p^{q^k} - 1$ is divisible by $q^{k+1}$.
\end{lemma}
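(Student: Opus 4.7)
\medskip

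\noindent\textbf{Proof proposal.} The plan is to proceed by induction on $k$, expanding a binomial. The base case $k=0$ is just the hypothesis $p\equiv 1\pmod{q}$, which gives $q^{1}\mid p-1 = p^{q^{0}}-1$.

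For the inductive step, I would assume that $q^{k+1}$ divides $p^{q^{k}}-1$ and write $p^{q^{k}}=1+q^{k+1}m$ for some integer $m$. Setting $a=p^{q^{k}}$, we have $p^{q^{k+1}}-1=a^{q}-1$, and the binomial theorem gives
\[
a^{q}-1 \;=\; \sum_{j=1}^{q}\binom{q}{j}\bigl(q^{k+1}m\bigr)^{j}.
\]
The $j=1$ term equals $q\cdot q^{k+1}m=q^{k+2}m$, which is clearly divisible by $q^{k+2}$. For each $j\ge 2$, the factor $\bigl(q^{k+1}\bigr)^{j}=q^{(k+1)j}$ already supplies at least $q^{2(k+1)}=q^{2k+2}$, and $2k+2\ge k+2$ holds for all $k\ge 0$, so every such term is divisible by $q^{k+2}$ as well. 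Summing, $q^{k+2}\mid a^{q}-1=p^{q^{k+1}}-1$, completing the induction.

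I do not expect a real obstacle here. One point that deserves a remark is that $q$ is not assumed to be prime, so the standard Lifting-the-Exponent lemma does not apply directly; however, the binomial argument above does not use primality of $q$ at all, only the inductive hypothesis. A second minor subtlety, worth flagging for the reader, is that no exceptional case arises for $q=2$ (unlike in the usual LTE setup), because the bound $(k+1)j\ge k+2$ for $j\ge 2$ is already tight enough without needing any extra factor from $\binom{q}{j}$.
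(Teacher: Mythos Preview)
Your proof is correct and is essentially the same as the paper's: both argue by induction on $k$, write $p^{q^{k+1}}-1=(1+v)^{q}-1$ with $v=p^{q^{k}}-1$ divisible by $q^{k+1}$, expand by the binomial theorem, and observe that the $j=1$ term contributes exactly $q^{k+2}$ while all higher-order terms carry at least $q^{2(k+1)}\ge q^{k+2}$. Your added remarks on non-prime $q$ and the absence of a $q=2$ exception are accurate and do not appear in the paper.
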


\begin{proof}    
Let $v_k = p^{q^{k}}-1$.
We show the statement by induction on $k$. 
For $k=0$, the assumption $p \equiv 1 \pmod{q}$ gives $q \mid v_0 = p-1$.
For the induction step, assume $q^{k+1} \mid v_k$ and consider $v_{k+1}$.
Using the binomial expansion, we write
\[
v_{k+1} =  (v_k + 1)^q - 1
= \sum_{i=1}^q \binom{q}{i} v_k^{i}= qv_k+\binom{q}{2}v_k^2+\cdots+\binom{q}{q-1}v_k^{q-1}+v_k^{q}.
\]
By the induction hypothesis, $v_k$ is a multiple of $q^{k+1}$.
Since $q>1$, each term on the right-hand side is therefore a multiple of $q^{k+2}$.
Hence $q^{k+2} \mid v_{k+1}$, completing the induction.
\end{proof}

With this lemma in hand, we obtain the following proposition.

\begin{proposition}\label{prop:dim2}
Let $d \ge 2$, and let $b_1, \dots, b_d$ be pairwise relatively prime integers. We abbreviate Euler's totient function of $b_1$ by $r$, i.e., $\phi(b_1)=:r$. Let $\ell$, $k$, $c_2, \dots, c_d\in\NN$, and define 
\[ n=(b_1^{k}-1) b_1^{\ell+1},\quad m=n + (b_1+1) b_1^\ell \prod_{j=2}^d b_j^{rc_jb_1^k}.\]
Then the following bounds hold:
\[
|\varphi_{b_j}(n)-\varphi_{b_j}(m)|\leq \frac{1}{b_j^{rc_jb_1^k}}
\]
for all $2 \le j \le d$ and 
\[
|\varphi_{b_1}(n)-\varphi_{b_1}(m)|\leq \frac{2}{b_1^{\ell+k+1}}.
\]
\end{proposition}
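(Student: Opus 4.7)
\medskip

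\noindent\textbf{Proof plan.}
The plan is to treat the two bounds separately, handling the estimates for $j \ge 2$ by a direct divisibility argument and the estimate for $j = 1$ through a careful base-$b_1$ digit analysis supported by Lemma~\ref{lem:LTE}.

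For the bounds with $j \ge 2$, I would observe that $m - n = (b_1+1)b_1^\ell \prod_{i=2}^d b_i^{rc_i b_1^k}$ is visibly divisible by $b_j^{rc_j b_1^k}$ since this factor appears in the product. Hence $n \equiv m \pmod{b_j^{rc_j b_1^k}}$, so the base-$b_j$ digits of $n$ and $m$ in positions $0, 1, \ldots, rc_j b_1^k - 1$ coincide. Since $\varphi_{b_j}$ reads these digits in reverse after the decimal point, the two radical inverses agree in the first $rc_j b_1^k$ digits, yielding $|\varphi_{b_j}(n) - \varphi_{b_j}(m)| \le b_j^{-rc_j b_1^k}$.

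For the $j=1$ bound, I would first use Lemma~\ref{lem:LTE} with $p = b_j^r$ and $q = b_1$; since $b_j^r \equiv 1 \pmod{b_1}$, the lemma yields $b_j^{r b_1^k} \equiv 1 \pmod{b_1^{k+1}}$ for every $j \ge 2$. Multiplying these congruences over $j=2,\ldots,d$ (and raising to the $c_j$-th power), I get
\[
M := \prod_{j=2}^d b_j^{rc_j b_1^k} \equiv 1 \pmod{b_1^{k+1}},
\]
so $M = 1 + b_1^{k+1} s$ for some non-negative integer $s$. A short algebraic simplification,
\[
(b_1^k-1)b_1^{\ell+1} + (b_1+1)b_1^\ell = b_1^{\ell+k+1} + b_1^\ell,
\]
then gives $m = b_1^\ell + b_1^{\ell+k+1}\bigl(1 + (b_1+1)s\bigr)$.

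From this closed form I would read off the base-$b_1$ digit expansions of $n$ and $m$: for $n$, digits $\ell+1$ through $\ell+k$ equal $b_1-1$ and all others vanish, while for $m$, digit $\ell$ equals $1$, digits $\ell+1$ through $\ell+k$ vanish, and the remaining higher digits encode $1+(b_1+1)s$. Computing the geometric sum gives $\varphi_{b_1}(n) = b_1^{-\ell-1} - b_1^{-\ell-k-1}$, while the tail at positions $\ge \ell+k+1$ contributes to $\varphi_{b_1}(m)$ a non-negative quantity bounded by $b_1^{-\ell-k-1}$, so $\varphi_{b_1}(m) \in [b_1^{-\ell-1},\, b_1^{-\ell-1}+b_1^{-\ell-k-1}]$. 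Subtracting yields $|\varphi_{b_1}(n)-\varphi_{b_1}(m)| \le 2/b_1^{\ell+k+1}$, as claimed. The main obstacle, and the only nontrivial step, is establishing that $M \equiv 1 \pmod{b_1^{k+1}}$, which is precisely where Lemma~\ref{lem:LTE} (a lifting-the-exponent-type statement) does the real work; everything else is either a direct divisibility check or bookkeeping of digit positions.
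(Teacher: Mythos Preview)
Your proposal is correct and follows essentially the same route as the paper: a divisibility argument for $j\ge 2$ showing that $n$ and $m$ share the first $rc_jb_1^k$ base-$b_j$ digits, and for $j=1$ the use of Lemma~\ref{lem:LTE} to force $\prod_{j\ge 2} b_j^{rc_jb_1^k}\equiv 1\pmod{b_1^{k+1}}$, followed by reading off the base-$b_1$ digits of $n$ and $m$. The only cosmetic difference is that the paper invokes Lemma~\ref{lem:LTE} once with $p=\prod_{j=2}^d b_j^{rc_j}$, whereas you apply it to each $b_j^r$ separately and multiply; your closed form $m=b_1^{\ell}+b_1^{\ell+k+1}(1+(b_1+1)s)$ is in fact a slightly cleaner way to see the digit pattern than the paper's display of the three expansions side by side.
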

\begin{proof}
For the first bound, observe that $m$ and $n$ share the first $rc_jb_1^k$ digits in their base $b_j$ expansions for any $2 \le j \le d$. Hence
\[|\varphi_{b_j}(n)-\varphi_{b_j}(m)|\leq \sum_{i=rc_jb_1^k+1}^\infty \frac{{b_j}-1}{b_j^{i}}=\frac{1}{b_j^{rc_jb_1^k}}.\]

For the second bound, we apply Lemma~\ref{lem:LTE} with $p= \prod_{j=2}^d b_j^{rc_j}$ and $q = b_1$, for which the assumption $p \equiv 1 \pmod{q}$ holds. The base $b_1$ expansions of $n$, the increment term, and $m$ are given, respectively, by 
\begin{align*}
    n & =(\underbrace{0,0,\ldots,0}_{\ell},0,\underbrace{{b_1}-1,{b_1}-1,\ldots,{b_1}-1}_{k})_{b_1},\\
    (b_1+1) b_1^\ell \prod_{j=2}^d b_j^{rc_jb_1^k} & =(\underbrace{0,0,\ldots,0}_{\ell},1,1,\underbrace{0,0,\ldots,0}_{k-1},\ldots)_{b_1}\\
    m & =(\underbrace{0,0,\ldots,0}_{\ell},1,\underbrace{0,0,\ldots,0}_{k},\ldots)_{b_1}.
\end{align*}
From these expansions, we obtain
\[
\varphi_{b_1}(n)= \frac{1}{b_1^{\ell+1}} - \frac{1}{b_1^{\ell+k+1}}
\]
and 
\[
\frac{1}{b_1^{\ell+1}} \le \varphi_{b_1}(m)
< \frac{1}{b_1^{\ell+1}} + \frac{1}{b_1^{\ell+k+1}}.
\]
Therefore,
\[ |\varphi_{b_1}(n)-\varphi_{b_1}(m)| \leq \frac{2}{b_1^{\ell+k+1}}.\]
This completes the proof.
\end{proof}

We aim to balance all upper bounds on $|\varphi_{b_j}(n)-\varphi_{b_j}(m)|$ for $1\le j\le d$. 
The following result suggests that this is possible.

\begin{proposition}\label{prop:existence_cj}
Let $d \ge 2$ and  $1<b_1 < \cdots < b_d$ be integers. For any integers $r,k \ge 1$, there exist integers {$1 \le c_2 \le b_1^{k(d-2)}$ and $0\leq c_j\leq c_2$ for $3 \le j \le d$} such that
\begin{equation}\label{eq:simul-approx}
b_d^{-r} b_2^{rc_2b_1^k} \le b_j^{rc_jb_1^k} \le b_d^r  b_2^{r c_2 b_1^k}
\end{equation}
holds for all $2 \le j \le d$.
\end{proposition}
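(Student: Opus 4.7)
The plan is to take logarithms of the double inequality~\eqref{eq:simul-approx}, reducing the statement to a simultaneous Diophantine approximation problem, and then invoke Dirichlet's simultaneous approximation theorem to produce the required integers.

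Setting $\gamma_j := \log b_2 / \log b_j$ for $2 \le j \le d$, a direct calculation shows that \eqref{eq:simul-approx} is equivalent to
\[
\left| c_j - c_2 \gamma_j \right| \le \frac{\log b_d}{b_1^k \log b_j} \qquad \text{for all } 2 \le j \le d.
\]
For $j = 2$ this is the trivial identity $|c_2 - c_2| = 0$, so only the range $3 \le j \le d$ carries content. Since $b_j \le b_d$ forces $\log b_d / \log b_j \ge 1$, it suffices to find $c_3, \dots, c_d \in \ZZ$ with $|c_j - c_2 \gamma_j| \le 1/b_1^k$.

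The case $d = 2$ is vacuous: only $c_2$ needs to be chosen, and $c_2 = 1$ satisfies $1 \le c_2 \le b_1^{k(d-2)} = 1$. For $d \ge 3$, I would apply Dirichlet's simultaneous approximation theorem to the $d - 2$ real numbers $\gamma_3, \dots, \gamma_d$ with parameter $Q = b_1^k$ (a positive integer). The standard pigeonhole argument produces an integer $c_2$ with $1 \le c_2 \le Q^{d-2} = b_1^{k(d-2)}$ together with integers $c_3, \dots, c_d \in \ZZ$ satisfying $|c_j - c_2 \gamma_j| \le 1/Q = 1/b_1^k$ for $3 \le j \le d$, which is exactly the bound we need.

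It remains to verify the constraint $0 \le c_j \le c_2$ for $j \ge 3$. Since $b_j > b_2$ gives $\gamma_j \in (0,1)$, we have $c_2 \gamma_j \in (0, c_2)$; combined with $1/b_1^k \le 1/2$, any integer within distance $1/b_1^k$ of $c_2 \gamma_j$ must lie in $\{0, 1, \dots, c_2\}$. I do not foresee a serious obstacle: once the problem is recast in logarithmic coordinates, it reduces directly to simultaneous Dirichlet approximation, and the factor $\log b_d / \log b_j \ge 1$ on the right-hand side provides precisely the slack required to absorb the $1/Q$ bound uniformly in $j$.
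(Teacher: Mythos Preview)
Your proposal is correct and follows essentially the same route as the paper: take logarithms to rewrite \eqref{eq:simul-approx} as $|c_j - c_2\gamma_j|\le (\log b_d)/(b_1^k\log b_j)$, note that $\log b_d/\log b_j\ge 1$ makes the target $1/b_1^k$ sufficient, and then apply Dirichlet's simultaneous approximation theorem to $\gamma_3,\dots,\gamma_d$ with $Q=b_1^k$. Your justification of $0\le c_j\le c_2$ via $1/b_1^k\le 1/2$ and $c_2\gamma_j\in(0,c_2)$ is slightly more explicit than the paper's one-line remark, but the argument is the same.
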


\begin{proof}
For $d=2$ or $j=2$, the inequalities \eqref{eq:simul-approx} hold
trivially.
Hence, we assume $d \ge 3$.
Let $\alpha_j =  \log b_2 /\log b_j$ for $j=3,\dots,d$.
By Dirichlet's simultaneous approximation theorem \cite[Chapter~2]{Sch80}, for any $k \in \NN$, there exist integers $1 \le c_2 \le b_1^{k(d-2)}$ and $c_j \in \ZZ$ for $3 \le j \le d$ such that
\begin{equation}\label{eq:simul_eq1}
|c_2\alpha_j - c_j| \le \dfrac{1}{b_1^k}     
\end{equation}
for all $3 \le j \le d$.
Since $0<\alpha_j<1$, we may choose $0 \le c_j \le c_2 \le b_1^{k(d-2)}$.

It follows from \eqref{eq:simul_eq1} that
\[
|rb_1^kc_2\log b_2 - rb_1^kc_j \log b_j| \le r\log b_j \le r\log b_d,
\]
which is equivalent to \eqref{eq:simul-approx}.
\end{proof}

We are now ready to prove Theorem~\ref{thm:halton_not_quasi-uniform}.

\begin{proof}[Proof of Theorem~\ref{thm:halton_not_quasi-uniform}]
    We abbreviate Euler's totient function of $b_1$ by $r$, i.e. $\phi(b_1)=:r$. For any $k\in\NN$, let $(c_2, \dots,c_d) := (c_2(k), \dots, c_d(k))$ be integers satisfying \eqref{eq:simul-approx} with $c_j \le b_1^{k(d-2)}$, and define
    \[
    M := M(k) = b_d^rb_2^{rc_2b_1^k}, \quad
    N := N(k) = \left\lceil (b_1+2)\frac{M^{d}}{b_1^k}\right\rceil, \quad
    \ell := \ell(k) = \left\lfloor \log_{b_1} \frac{M}{b_1^k}\right\rfloor.
    \]
    Since $b_2^{r b_1^k}-1$ is a multiple of $b_1^{k+1}$ (see Lemma~\ref{lem:LTE}), we have $M \ge b_2^{rb_1^k} > b_1^{k+1}$ and hence $\ell \geq 1$. 
    Let $n=(b_1^{k}-1) b_1^{\ell+1}$ and $m=n + (b_1+1) b_1^\ell \prod_{j=2}^d b_j^{rc_jb_1^k}$.
    We show that the $n$-th and $m$-th points of the Halton sequence are sufficiently close.

    First, we show that $0\le n<m <N$ holds. Using Proposition~\ref{prop:existence_cj} together with the definitions of $n$, $M$, $\ell$, and $N$, we obtain the following chain of inequalities:
    \begin{align*}
    m & = n + (b_1+1) b_1^\ell \prod_{j=2}^d b_j^{rc_jb_1^k}\\
    &< b_1^\ell M +  (b_1+1) b_1^\ell M^{d-1} 
    \le (b_1+2) b_1^\ell M^{d-1}
    \le (b_1+2)\dfrac{M}{b_1^k}M^{d-1}
    \le N.
    \end{align*}

    Next, we derive the inequalities in \eqref{eq:M_lower_bound} that are needed for the proof.
    Choose $k$ sufficiently large so that $b_1^k \ge 2(b_1+2) b_d^{rd}$.
    Then we have
    \[
    N \le 2(b_1+2)\dfrac{M^d}{b_1^k}
    \le b_2^{rc_2b_1^kd} \le b_2^{rd b_1^{k(d-1)}},
    \]
    where we used $c_2\leq b_1^{k(d-2)}$, and thus
    \[
    \left(\dfrac{\log N}{rd \log b_2}\right)^{1/(d-1)} \le b_1^k.
    \]
    This implies that
    \begin{equation} \label{eq:M_lower_bound}
    M^d \ge \frac{1}{2(b_1+2)} N b_1^k
    \ge C N(\log N)^{1/(d-1)},
    \end{equation}
    with constant $C = 2^{-1}(b_1+2)^{-1}(rd \log b_2)^{-1/(d-1)}$, depending only on $b_1, b_2$ and $d$.

    Finally, we bound the distance between the points $\bsx_n$ and $\bsx_m$.
    From Proposition~\ref{prop:dim2}, the definition of $M$, and \eqref{eq:simul-approx}, for $2 \le j \le d$, we derive
    \[
    |\varphi_{b_j}(n)-\varphi_{b_j}(m)| \le \frac{1}{b_j^{rc_jb_1^k}}
    \le \frac{b_d^r}{b_2^{rc_2b_1^k}} = \dfrac{b_d^{2r}}{M}.
    \]
    Proposition~\ref{prop:dim2}, together with the definition of $\ell$, implies that
    \[
    |\varphi_{b_1}(n)-\varphi_{b_1}(m)|
    \le \frac{2}{b_1^{\ell+k+1}}
    \le \frac{2}{b_1^{\log_{b_1}(M/b_1^k)+k}} =
    \frac{2}{M}.
    \]
    Hence, using \eqref{eq:M_lower_bound}, which holds true under the assumption that $k$ is sufficiently large so that $b_1^k \ge 2(b_1+2) b_d^{dr}$, we have
    \[
    \|\bsx_n-\bsx_m\|= \left(\sum_{j=1}^{d}|\varphi_{b_j}(n)-\varphi_{b_j}(m)|^2\right)^{1/2}
    \le \dfrac{d^{1/2}b_d^{2r}}{M}
    \le \dfrac{d^{1/2}b_d^{2r}}{C^{1/d}}\dfrac{1}{N^{1/d}(\log N)^{1/(d(d-1))}}.
    \]

    Thus, there exist infinitely many $N$ for which this inequality holds. In particular, this proves that the Halton sequence is not quasi-uniform.
\end{proof}

\begin{op}
We have shown that there exist infinitely many $N$ for which the separation radius satisfies
\[
q(P_N) \lesssim \frac{1}{N^{1/d} (\log N)^{1/(d(d-1))}}.
\]
It remains an open question to determine the exact order of the separation radius for the Halton sequence.
\end{op}

\section{Further results: Halton-type sequences}\label{sec:Halton-type}

As mentioned in Section~\ref{sec:intro}, for any prime base $p$, the $p$-dimensional Faure sequence fails to be quasi-uniform; see \cite[Theorem~4.3]{DGS25}. The proof in \cite{DGS25} relies on properties of Pascal matrices modulo~$p$ together with Lucas's theorem. In this section, we work out an alternative proof of \cite[Theorem~4.3]{DGS25} from the viewpoint of Halton-type sequences, using arithmetic in the ring of polynomials. We also present further examples of Halton-type sequences that are not quasi-uniform.

\subsection{Definitions}
To begin with, we introduce a polynomial analogue of the radical inverse function $\varphi_b$ and recall the definition of Halton-type sequences in the sense of Hofer \cite{Ho13}. For more general definitions of Halton-type sequences, we refer to \cite{Ho18}.
Note that the notion of Halton-type sequences is different from that of polynomial Halton sequences defined by Tezuka \cite{Te93}.
In what follows, $\PP$ denotes the set of prime numbers. For $p\in \PP$, let $\FF_p$ be the finite field with $p$ elements, identified with the set $\{0,1,\ldots,p-1\}$ equipped with addition and multiplication modulo $p$. Moreover, we write $\FF_p[X]$ for the ring of polynomials over $\FF_p$.
   
\begin{definition}[$b(X)$-adic radical inverse function $\varphi_{b(X)}$]\label{def:polynomial_radical_inverse}
	Let $p \in \PP$, and let $b(X)\in\FF_p[X]$ be a polynomial of degree $\deg(b(X))=:e\geq 1$, $e\in\NN$. For $n\in \NN_0$ with its $p$-adic expansion $n=n_0+n_1p+n_2p^2+\cdots$, we associate the polynomial $n(X)=n_0X^0+n_1X^1+n_2X^2+\cdots \in\FF_p[X]$. We then expand $n(X)$ in base $b(X)$, that is 
	\[ n(X)=a_0(X)b(X)^0+a_1(X)b(X)^1+a_2(X)b(X)^2+\cdots , \]
	with $a_j(X)\in\FF_p[X]$ satisfying $\deg(a_j(X))<e$ for every $j\geq 0$. 
	The $b(X)$-adic radical inverse function $\varphi_{b(X)}$ is defined by 
	\[ \varphi_{b(X)}(n):=\sum_{j\geq 0}\frac{a_j(p)}{(p^e)^{j+1}},\]
    {where $a_j(p)$ is computed by identifying the coefficients of $a_j$ with the integers in $\{0,1,\dots,p-1\}$}.
	Since $0\leq a_j(p)<p^e$, we have $\varphi_{b(X)}:\NN_0 \to [0,1)$. 
\end{definition}
	
\begin{definition}[Halton-type sequence]
	Let $p\in\PP$ and $d\in\NN$. Let $b_1(X),\ldots,b_d(X)\in \FF_p[X]$ be pairwise relatively prime polynomials with degrees $e_1,\ldots ,e_d\geq 1$. Then the sequence $\{\bsx_n\}_{n\geq 0}$ defined by
	$$\bsx_n:=(\varphi_{b_1(X)}(n),\ldots,\varphi_{b_d(X)}(n))$$
	is called the \emph{Halton-type sequence} in bases $b_1(X),\ldots,b_d(X)$.
\end{definition}

Additionally, we introduce the digital construction and $(t,d)$-sequences in a reduced form, which is sufficient for our purposes (for more details, see \cite[Chapter~4]{Nie92}). 

\begin{definition}[Digital construction and $(t,d)$-sequence]
	Let $p\in\PP$, $d\in\NN$, and let $C_1,\ldots,C_d\in\FF_p^{\NN\times \NN_0}$. A \emph{digital sequence} $\{\bsx_n\}_{n\geq 0}$ with $\bsx_n=(x_{n,1},\ldots,x_{n,d})\in[0,1]^d$ is constructed as follows. For each $\ell \in\{1,\ldots,d\}$ and $n\in\NN_0$, write the base $p$ expansion $n=n_0+n_1p+n_2p^2+\cdots$, and set $\vec{n}:=(n_0,n_1,n_2,\ldots)^{\top}\in \FF_p^{\NN_0}$. Compute $C_{\ell}\cdot\vec{n}=:(y^{(\ell)}_{n,1},y^{(\ell)}_{n,2},y^{(\ell)}_{n,3},\ldots)^{\top}\in\FF_p^{\NN}$, and define 
    \[ x_{n,\ell} := \sum_{j=1}^\infty\frac{y^{(\ell)}_{n,j}}{p^{j}}.\]
    
    Let $t\in\NN_0$. The sequence $\{\bsx_n\}_{n\geq 0}$ is called a \emph{$(t,d)$-sequence over $\FF_p$} if for every integer $m>t$ the following holds: for any choice of non-negative integers $w_1,\ldots,w_d$ with $w_1+\cdots+w_d=m-t$, the set of row vectors formed by taking, for each $\ell=1,\ldots,d$, the first $w_{\ell}$ rows of the matrix consisting of the first $m$ columns of $C_{\ell}$ is linearly independent over $\FF_p$.
\end{definition}

It is important to note that the Faure sequence can be viewed both as a digital sequence and as a Halton-type sequence as follows.

\begin{remark}
    Define the Pascal matrix $P$ as $P=(\binom{j}{i})_{i,j\geq 0}$, where we set $\binom{j}{i}=0$ if $j<i$. Let $I$ be the $\infty\times\infty$ identity matrix. For a prime $p\in\PP$, set $C_1:=I$, $C_2:=P\pmod{p}$, $\ldots$, $C_p:=P^{p-1}\pmod{p}$. Then the $p$-dimensional Faure sequence in base $p$ is the digital sequence generated by $C_1,\ldots,C_p$, which is also a $(0,p)$-sequence over $\FF_p$.

    Similarly, let $b_1(X)=X$, $b_2(X)=X-1$, $\ldots$, $b_p(X)=X-(p-1)$ over $\FF_p$. Then the $p$-dimensional Faure sequence in base $p$ is the Halton-type sequence in these bases (see, e.g., \cite[Example~4]{Ho13}). Moreover, in the case $p=2$, the corresponding Halton-type sequence coincides with the two-dimensional Sobol' sequence.
\end{remark}

Before moving on, we point out that, as proven in \cite[Proof of Theorem~4]{Ho18}, any Halton-type sequence is a $(0,\boldsymbol{e},d)$-sequence in base $p$ with $\boldsymbol{e}=(e_1,\ldots,e_d)$ in the sense of Tezuka \cite{Te13}, who introduced the concept of $(t,\boldsymbol{e},d)$-sequences.

\subsection{Results}

First, we prove that any Halton-type sequence achieves the optimal rate for the covering radius. 
\begin{proposition}
    Let $p\in\PP$ and $d\in\NN$. Let $b_1(X),\ldots,b_d(X)\in \FF_p[X]$ be pairwise relatively prime polynomials with degrees $e_1,\ldots ,e_d\geq 1$. Let $\Scal$ be the $d$-dimensional Halton-type sequence in bases $b_1(X),\ldots,b_d(X)$. Then, for any $N\ge 1$, we have
    \[ h(P_N) \le \frac{\sqrt{d}}{N^{1/d}} \max_{1 \le \ell \le d} p^{e_{\ell}}. \]
\end{proposition}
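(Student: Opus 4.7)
The plan is to invoke the fact, noted in the paragraph preceding the proposition, that any Halton-type sequence is a $(0,\boldsymbol{e},d)$-sequence in base $p$ with $\boldsymbol{e}=(e_1,\ldots,e_d)$ in the sense of Tezuka; equivalently, this distribution property follows directly from the Chinese remainder theorem applied to the pairwise coprime polynomials $b_1(X),\ldots,b_d(X)\in\FF_p[X]$. Either formulation yields the key input: for any non-negative integers $m_1,\ldots,m_d$, the first $p^{e_1m_1+\cdots+e_dm_d}$ points of the sequence place exactly one sample in each elementary interval of the form $\prod_{\ell=1}^{d}[a_{\ell}p^{-e_{\ell}m_{\ell}},(a_{\ell}+1)p^{-e_{\ell}m_{\ell}})$, where $a_{\ell}\in\{0,\ldots,p^{e_{\ell}m_{\ell}}-1\}$. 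From this point on I would mimic the volumetric dispersion argument used for the classical Halton sequence in \cite[Theorem~6.12]{Nie92}.

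Given $N\ge 1$, I would set $m_{\ell}:=\lfloor (\log_{p}N)/(d e_{\ell})\rfloor$ for each $\ell$. Summing the defining inequality $e_{\ell}m_{\ell}\le (\log_{p}N)/d$ over $\ell$ yields $\prod_{\ell}p^{e_{\ell}m_{\ell}}\le N$, so the property above applies within the first $N$ points and guarantees at least one sample in every elementary interval of the chosen shape. On the other hand, $e_{\ell}(m_{\ell}+1)>(\log_{p}N)/d$ rearranges to $p^{-e_{\ell}m_{\ell}}<p^{e_{\ell}}/N^{1/d}\le (\max_{1\le \ell\le d}p^{e_{\ell}})/N^{1/d}$, uniformly in $\ell$. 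For an arbitrary $\bsx\in[0,1]^d$, choosing $\bsx_i$ with $i<N$ to be a sequence point in the unique elementary interval containing $\bsx$ then yields $|x_{\ell}-x_{i,\ell}|<(\max_{\ell}p^{e_{\ell}})/N^{1/d}$ for every coordinate, whence $\|\bsx-\bsx_i\|<\sqrt{d}(\max_{\ell}p^{e_{\ell}})/N^{1/d}$. Since $\bsx$ was arbitrary, this bounds $h(P_N)$ as claimed.

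I do not anticipate a substantial obstacle. The one point that deserves a brief check is the boundary case $m_{\ell}=0$, which occurs precisely when $N<p^{de_{\ell}}$; in that regime the intended inequality $p^{-e_{\ell}m_{\ell}}=1<p^{e_{\ell}}/N^{1/d}$ still holds, so the coordinate-wise estimate goes through unchanged. The main conceptual step is really the invocation of the $(0,\boldsymbol{e},d)$-property (equivalently, the polynomial Chinese remainder theorem for $b_1(X),\ldots,b_d(X)$); the rest is a direct translation of Niederreiter's argument to the polynomial Halton-type setting.
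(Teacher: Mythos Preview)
Your argument is correct and is essentially the paper's own proof: your integers $m_\ell=\lfloor(\log_p N)/(de_\ell)\rfloor$ coincide with the $j_\ell$ defined there by $p^{e_\ell j_\ell}\le N^{1/d}<p^{e_\ell(j_\ell+1)}$, and both proofs invoke the $(0,\boldsymbol{e},d)$-property from \cite{Ho18} to place a point in every elementary box, then bound the covering radius by the box diameter. The only cosmetic difference is that the paper writes the last step as $h(P_N)\le h(P_{N'})\le\operatorname{diam}(J)$ with $N'=\prod_\ell p^{e_\ell j_\ell}\le N$, whereas you pick a nearby point directly.
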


\begin{proof}
    The argument follows the proof of \cite[Theorem~6.12]{Nie92}. For each $\ell\in \{1,\ldots,d\}$, let $j_{\ell}$ be the unique non-negative integer such that $p^{e_{\ell}j_{\ell}}\le N^{1/d}<p^{e_{\ell}(j_{\ell}+1)}$. Consider the partition of $[0,1)^d$ into intervals of the form
    \[ J=\prod_{\ell=1}^{d}\left[ \frac{c_{\ell}}{p^{e_{\ell}j_{\ell}}}, \frac{c_{\ell}+1}{p^{e_{\ell}j_{\ell}}}\right)\]
    with integers $0\le c_{\ell}<p^{e_{\ell}j_{\ell}}$ for $1\le \ell\le d$. As shown in \cite[Proof of Theorem~4]{Ho18}, among the first $N'=\prod_{\ell=1}^d p^{e_{\ell}j_{\ell}}$ points of $\Scal$, exactly one point lies in each such interval $J$. Hence
    \[ h(P_N)\le h(P_{N'})\le \operatorname{diam}(J)\le \sqrt{d}\max_{1 \le \ell \le d}\frac{1}{p^{e_{\ell}j_{\ell}}}\le \frac{\sqrt{d}}{N^{1/d}} \max_{1 \le \ell \le d} p^{e_{\ell}},\]
    which proves the claim.
\end{proof}

Now the situation is analogous to that of the classical Halton sequence: the question of whether a Halton-type sequence is quasi-uniform reduces to determining whether its separation radius decays no faster than $N^{-1/d}$. At present, in contrast to the classical Halton sequences, we do not know whether every Halton-type sequence is not quasi-uniform for $d\ge 2$ (cf.\ Open Problem \ref{op2}). In what follows, we present some examples whose separation radius decays faster than $N^{-1/d}$, and thus they are not quasi-uniform. These examples are collected in the following theorem.

\begin{theorem}\label{thm:Halton-type}
    Let $p\in\PP$ and $d\in\NN$. Let $b_1(X),\ldots,b_d(X)\in \FF_p[X]$ be pairwise relatively prime polynomials with degrees $e_1,\ldots ,e_d\geq 1$. Let $\Scal$ be the $d$-dimensional Halton-type sequence in bases $b_1(X),\ldots,b_d(X)$. Consider the following cases:
    \begin{enumerate}
        \item $d=2$, $b_1(X)=X+a$, and $b_2(X)=X+a+1$ with $a\in \FF_p$.
        \item $d=3$, $p=2$, $b_1(X)=X$, $b_2(X)=X+1$, and $b_3(X)=X^2+X+1$.
        \item $d=p$, $b_1(X)=X$, $b_2(X)=X-1$, $\ldots$, $b_p(X)=X-(p-1)$.
    \end{enumerate}
    Then there exists a constant $c>0$, depending on the case, such that for infinitely many $N\in\NN$ we have
    \[
      q(P_N) \le \frac{c}{N^{1/(d-1)}}.
    \]
    In particular, each of these sequences is not quasi-uniform.
\end{theorem}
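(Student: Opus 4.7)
The plan is to parallel Section~\ref{sec:Halton}, but working in the ring $\FF_p[X]$ instead of $\ZZ$, with the Frobenius endomorphism playing the role of Lemma~\ref{lem:LTE}. The goal for each of the three cases is to construct, for infinitely many $k$, a pair of indices $(n_k, m_k)$ with $n_k, m_k < N_k$ such that $\|\bsx_{n_k} - \bsx_{m_k}\| \le c\,N_k^{-1/(d-1)}$.

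First I would establish the Halton-type analog of Proposition~\ref{prop:dim2}: if $n(X) \equiv m(X) \pmod{b(X)^k}$ in $\FF_p[X]$, then
\[
|\varphi_{b(X)}(n) - \varphi_{b(X)}(m)| \le p^{-e k},
\]
with $e=\deg b$, directly from Definition~\ref{def:polynomial_radical_inverse}. In addition, one has the exact identity $\varphi_{b(X)}(n) - \varphi_{b(X)}(m) = \sum_i v_i^{(b)}/p^{e(i+1)}$, where $v_i^{(b)}$ is the signed difference of the integer lifts (in $\{0,\ldots,p^e-1\}$) of the $i$-th base-$b$ digits; this allows coordinate differences to be small through signed cancellation even when no strict divisibility holds.

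Next I would exploit the algebraic structure of each case through a Frobenius-type identity. For case~(1), $b_2 = b_1 + 1$ together with Frobenius gives $b_2(X)^{p^s} = b_1(X)^{p^s} + 1$. For case~(2), the factorization $b_1 b_2 b_3 = X^4 - X$ in $\FF_2[X]$ together with Frobenius yields $(b_1 b_2 b_3)^{2^s} = X^{2^{s+2}} - X^{2^s}$, a binomial lying in $(b_1^{2^s} b_2^{2^s} b_3^{2^s})$. For case~(3), $\prod_\ell b_\ell = X^p - X$ and Frobenius give $(X^p - X)^{p^s} = X^{p^{s+1}} - X^{p^s}$. These identities suggest index pairs whose polynomial difference has very few nonzero monomials. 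A natural candidate family for case~(1) with $a=0$ is $n_k = 1$ and $m_k = 1 + p^{p^s - 1}$ (so that $m_k(X) - n_k(X) = X^{p^s-1}$), which for $p=2$ reduces to the close pair $(1,9)$ inside the first $16$ points of the $2$-dimensional Sobol' sequence; here Lucas' theorem makes the Taylor coefficients of $X^{p^s-1}$ at $-a-1$ explicit and yields coordinate differences of size $\pm 1/p^{p^s} = \pm 1/N_k$. Cases~(2) and~(3) should admit analogous constructions built from the Frobenius identities above.

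The main obstacle is that a naive pigeonhole-plus-divisibility argument on $\FF_p[X]/(b_1^{k_1}\cdots b_d^{k_d})$ only delivers the weaker Halton-type rate $N^{-1/d}$, because $\sum_\ell e_\ell k_\ell \le \deg(n(X)-m(X)) < \log_p N$. Attaining the sharper rate $N^{-1/(d-1)}$ therefore requires going beyond pure divisibility, and simultaneously exploiting the sparsity of Frobenius-derived polynomials (which keep $n_k, m_k$ far smaller than a generic element of the ideal) and the sign cancellations arising in the base-$b$ expansions (whose signs are dictated by Lucas' theorem). Adapting the base case to $p>2$ in case~(1) and extending the construction to the higher-dimensional cases~(2) and~(3), while carefully tracking how $N_k$ compares to $\max(n_k, m_k)$, is where the bulk of the verification lies, analogously to (but without the Dirichlet-approximation step of) the proof of Theorem~\ref{thm:halton_not_quasi-uniform}.
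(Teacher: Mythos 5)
Your framework is right: you correctly identify the key lemma (divisibility $b(X)^\ell \mid n(X)-m(X)$ implies $|\varphi_{b(X)}(n)-\varphi_{b(X)}(m)|\le p^{-e\ell}$, which is exactly the paper's Lemma~\ref{lem:key}), you correctly see that Frobenius-derived factorizations are the engine, and you correctly observe that pure divisibility in all $d$ coordinates only buys $N^{-1/d}$, so one coordinate must be treated by a direct digit computation. Your case-(1) pair $(1,\,1+p^{p^s-1})$ is indeed the paper's pair when $p=2$. However, two substantive gaps remain.

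First, your case-(1) candidate fails for $p>2$ because of a missing factor of $p-1$. The paper takes $n_2(X)=(p-1)\sum_{j=1}^{m-1}b_2(X)^j$, so $n_2(X)-n_1(X)=(p-1)\,b_1(X)^{m-1}$ and the base-$b_2$ digits of $n_2$ are manifestly $(0,p-1,\dots,p-1)$, giving $|\varphi_{b_2}(n_1)-\varphi_{b_2}(n_2)|=1/p^m$. Your choice $m_k(X)-n_k(X)=X^{p^s-1}$ (without the $p-1$) does \emph{not} yield that: for $p=3$, $a=0$, $s=1$, one finds $m_k(X)=1+X^2=2+(X+1)+(X+1)^2$, so $\varphi_{b_2}(m_k)-\varphi_{b_2}(n_k)=13/27$, not $1/27$. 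Your Lucas-based claim of a $\pm 1/p^{p^s}$ coordinate difference is therefore not correct as stated. (With the extra factor $p-1$ the Lucas computation does produce the digits $(0,p-1,\dots,p-1)$, recovering the paper's expansion, but you have to notice you need it. Also, the paper handles all $a\in\FF_p$, whereas you fix $a=0$.)

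Second, cases (2) and (3) are only gestured at. The paper's mechanism there is more delicate than the single Frobenius identity you quote: it chooses $n_2$ directly via its base-$b_d$ expansion so that the last coordinate has separation exactly $1/N$, then shows $n_2(X)-n_1(X)$ factors as a product $\prod_{\ell<d} b_\ell(X)^{k_\ell}$ with $\sum_{\ell<d} e_\ell k_\ell$ essentially equal to the full degree budget $\approx\log_p N$, forcing the rate $N^{-1/(d-1)}$. For case (2) the relevant identity is $b_3(X)-1=b_1(X)b_2(X)$ combined with Frobenius applied to $b_3(X)^m-1=(b_3(X)-1)^m$; for case (3) it is $\prod_{\ell=2}^p b_\ell(X)=X^{p-1}-1$ together with $(X^{p-1}-1)^{p^w}=X^{(p-1)p^w}-1$. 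Your identity $(X^p-X)^{p^s}=X^{p^{s+1}}-X^{p^s}$ factors through \emph{all} $p$ bases, which would put you back in the $N^{-1/d}$ regime unless you peel off the $b_1=X$ factor explicitly; the paper avoids this by using $X^{p-1}-1$ from the start. These pieces are missing from your sketch and constitute the bulk of the argument.

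In short, the approach is the same in spirit as the paper's, but the case-(1) constant is wrong for $p>2$, the restriction to $a=0$ is not addressed, and the cases (2)--(3) constructions are not carried out. The auxiliary ``signed difference'' formalism you mention does not end up playing a role; the paper gets its cancellation simply by choosing $n_2$ so that the base-$b_d$ digit string of $n_2$ is $(0,p-1,\dots,p-1)$ while $n_1=1$ has digit string $(1,0,\dots,0)$, making the difference exactly $1/N$ by inspection.
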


\begin{remark}
    The first case shows that any two-dimensional projection of the successive coordinates of the Faure sequence is not quasi-uniform. In particular, when $p=2$, the cases $a=0$ and $a=1$ coincide, and the corresponding sequence is the two-dimensional Sobol' sequence that is already known to be not quasi-uniform \cite{G24,S25}. Thus, this result can be seen as a generalization of the known result. 

    The second case is related to the three-dimensional Sobol' sequence, as both sequences are $(0,(1,1,2),3)$-sequences constructed from the same polynomials $X,X+1,X^2+X+1$ over $\FF_2$. Hence, the result can be seen as an extension of \cite{G24,S25}.

    The last case corresponds exactly to the $p$-dimensional Faure sequence in base $p$. Since this sequence is already known to be not quasi-uniform \cite{DGS25}, our contribution here is to provide an alternative proof from the viewpoint of Halton-type sequences.
\end{remark}

To prove Theorem~\ref{thm:Halton-type}, we begin with our key lemma.
\begin{lemma}\label{lem:key}
	Let $p\in\PP$ and $\{\bsx_n\}_{n\geq 0}$ be the one-dimensional Halton-type sequence in base $b(X)$ of degree $e\geq 1$. Let $n,m\in\NN_0$ with associated polynomials $n(X)$ and $m(X)$ in $\FF_p[X]$. If there exists an integer $\ell\ge 1$ such that $b(X)^{\ell}|(n(X)-m(X))$, then $|\bsx_n-\bsx_m|\le p^{- e \ell}$. 
\end{lemma}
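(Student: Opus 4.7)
The plan is to exploit the uniqueness of the $b(X)$-adic expansion under the degree constraint $\deg(a_j(X))<e$. First I would fix the expansions
\[ n(X)=\sum_{j\geq 0}a_j(X)b(X)^j,\qquad m(X)=\sum_{j\geq 0}a'_j(X)b(X)^j, \]
with $\deg(a_j),\deg(a'_j)<e$, as in Definition~\ref{def:polynomial_radical_inverse}. Subtracting term by term gives
\[ n(X)-m(X)=\sum_{j\geq 0}(a_j(X)-a'_j(X))\,b(X)^j, \]
which is itself a valid $b(X)$-adic expansion of $n(X)-m(X)$, since $\deg(a_j-a'_j)<e$.

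Next I would use the assumption $b(X)^\ell\mid (n(X)-m(X))$: writing $n(X)-m(X)=b(X)^\ell Q(X)$ and expanding $Q(X)=\sum_{i\geq 0}q_i(X)b(X)^i$ with $\deg(q_i)<e$, uniqueness of the $b(X)$-adic expansion (proved, e.g., by induction on the smallest index where two representations could disagree, using that any nonzero difference of coefficients has degree strictly less than $\deg b$ and hence cannot be absorbed into a higher power of $b(X)$) forces
\[ a_j(X)=a'_j(X)\quad\text{for every }0\leq j<\ell. \]
Plugging into the definition of $\varphi_{b(X)}$, the first $\ell$ terms cancel and I obtain
\[ \varphi_{b(X)}(n)-\varphi_{b(X)}(m)=\sum_{j\geq \ell}\frac{a_j(p)-a'_j(p)}{(p^e)^{j+1}}. \]

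Finally, to bound the right-hand side I would treat the two tails separately. Since the coefficients of each $a_j(X)$ lie in $\{0,1,\dots,p-1\}$ and $\deg(a_j)<e$, one has $0\leq a_j(p)\leq p^e-1$, and the geometric series identity
\[ \sum_{j\geq \ell}\frac{p^e-1}{(p^e)^{j+1}}=\frac{1}{(p^e)^\ell} \]
shows that each tail lies in $[0,p^{-e\ell}]$; their difference is therefore at most $p^{-e\ell}$ in absolute value, giving the claim. The only subtle point is the uniqueness statement for $b(X)$-adic expansions with degree-constrained digits; everything else is a direct computation. I expect this to be the step worth writing out in detail, while the tail estimate is a routine geometric-series bound.
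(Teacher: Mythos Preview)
Your proposal is correct and follows essentially the same route as the paper: match the first $\ell$ digits of the $b(X)$-adic expansions via the divisibility hypothesis, then bound the remaining tail by the geometric series $\sum_{j\ge\ell}(p^e-1)(p^e)^{-(j+1)}=p^{-e\ell}$. The paper simply asserts the digit-matching step as ``straightforward'' without invoking uniqueness explicitly, whereas you spell out why it holds; otherwise the arguments coincide.
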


\begin{proof}
Consider the base-$b(X)$ expansions of $n(X)$ and $m(X)$;
\begin{align*}
    n(X) & = a_0(X)b(X)^0+a_1(X)b(X)^1+a_2(X)b(X)^2+\cdots \\
    m(X) & = c_0(X)b(X)^0+c_1(X)b(X)^1+c_2(X)b(X)^2+\cdots.
\end{align*}
It is straightforward to see that if $b(X)^{\ell}|(n(X)-m(X))$, then
\[ a_0(X)=c_0(X), \ldots, a_{\ell-1}(X)=c_{\ell-1}(X).\]
Hence, by Definition~\ref{def:polynomial_radical_inverse}, we have
\[ |\bsx_n-\bsx_m|\le \sum_{j\geq \ell}\frac{p^e-1}{(p^e)^{j+1}}=\frac{1}{p^{e\ell}},\]
which proves the lemma.
\end{proof}
	
\begin{proof}[Proof of Theorem~\ref{thm:Halton-type}]
    We prove the three cases separately. In what follows, we use the identity
    \[ (a(X)+b(X))^{p^w}=a(X)^{p^w}+b(X)^{p^w}\]
    for $a(X),b(X)\in \FF_p[X]$ and $w\in \NN$, often and without further comment.

    \textbf{First case.} For $w\in \NN$, set $N=p^m$ with $m=p^w$. Consider $0\le n_1,n_2<N$ such that
    \[ n_1(X)=1,\quad n_2(X)=(p-1)\sum_{j=1}^{m-1}b_2(X)^j. \]
    Then we have
    \[n_2(X)-n_1(X)=(p-1)\frac{b_2(X)^{m}-1}{b_2(X)-1}=(p-1)\frac{(b_2(X)-1)^m}{b_2(X)-1}=(p-1)b_1(X)^{m-1}.\]
    
    Applying Lemma~\ref{lem:key} with $b(X)=b_1(X)$ and $\ell=m-1$, we obtain
    \[  |\varphi_{b_1(X)}(n_1)-\varphi_{b_1(X)}(n_2)|\le \frac{1}{p^{m-1}}=\frac{p}{N}. \] 
    Moreover, a direct calculation gives
    \[ |\varphi_{b_2(X)}(n_1)-\varphi_{b_2(X)}(n_2)|=\left|\frac{1}{p}-(p-1)\sum_{j=1}^{m-1}\frac{1}{p^{j+1}}\right|=\frac{1}{p^{m}}=\frac{1}{N}.\]
    Thus we have
    \[ q(P_N)\le \frac{1}{2}\|\bsx_{n_1}-\bsx_{n_2}\|=\frac{\sqrt{1+p^2}}{2N}. \]
    Since this bound holds for any $w\in \NN$ and the corresponding $N$, the claim follows. 

    \textbf{Second case.} Similar to the first case, for $w\in \NN$, set $N=2^{2m}$ with $m=2^w$. Consider $0\le n_1,n_2<N$ such that 
    \[ n_1(X)=b_1(X), \quad  n_2(X)=1+ b_2(X)\sum_{j=1}^{m-1}b_3(X)^j. \]
    We observe that
    \[
          \sum_{j=0}^{m-1}b_3(X)^j =\frac{b_3(X)^{m}-1}{b_3(X)-1} = \frac{(b_3(X)-1)^m}{b_3(X)-1}=b_1(X)^{m-1}b_2(X)^{m-1}.
    \]
    It follows that 
    \[ n_2(X)-n_1(X)=1+b_2(X)\left( b_1(X)^{m-1}b_2(X)^{m-1}-1\right)-b_1(X)=b_1(X)^{m-1}b_2(X)^{m}.\]
    Hence, we have $b_1(X)^{m-1}\mid n_2(X)-n_1(X)$ and $b_2(X)^{m}\mid n_2(X)-n_1(X)$.

    Applying Lemma~\ref{lem:key}, we obtain
    \[  |\varphi_{b_1(X)}(n_1)-\varphi_{b_1(X)}(n_2)|\le \frac{1}{2^{m-1}}=\frac{2}{N^{1/2}}. \]
    and 
    \[  |\varphi_{b_2(X)}(n_1)-\varphi_{b_2(X)}(n_2)|\le \frac{1}{2^{m}}=\frac{1}{N^{1/2}}. \]
    Moreover, a direct calculation gives $\varphi_{b_3(X)}(n_1)=1/2$ and 
    \[ \varphi_{b_3(X)}(n_2)=\frac{1}{4}+\sum_{j=1}^{m-1}\frac{3}{4^{j+1}}=\frac{1}{2}-\frac{1}{2^{2m}}.\]
    Hence, $|\varphi_{b_3(X)}(n_1)-\varphi_{b_3(X)}(n_2)|=1/2^{2m}=1/N\le 1/N^{1/2}$.
    Thus we have
    \[ q(P_N)\le \frac{1}{2}\|\bsx_{n_1}-\bsx_{n_2}\|\le \frac{\sqrt{6}}{2N^{1/2}}. \]
    Since this bound holds for any $w\in \NN$ and the corresponding $N$, the claim follows. 
    
    \textbf{Third case.} Finally, we prove the last case. For $w\in \NN$, set $N=p^{m}$ with $m=(p-1)p^w$. Consider $0\le n_1,n_2<N$ such that 
    \[ n_1(X)=1, \quad  n_2(X)=(p-1)\sum_{j=1}^{m-1}X^j. \]
    Then, working in $\FF_p[X]$, we have
    \[ n_2(X)-n_1(X)=-1+(p-1)\sum_{j=1}^{m-1}X^j=(p-1)\frac{X^m-1}{X-1}=(p-1)\frac{X^m-1}{b_2(X)}.\]
    
    Since all non-zero elements of $\FF_p$ are roots of $X^{p-1}-1$, we have
    \[ \prod_{\ell=2}^{p}b_{\ell}(X)=X^{p-1}-1. \]
    By taking the $p^w$-th power of both sides, we get
    \[ \prod_{\ell=2}^{p}b_{\ell}(X)^{p^w}=(X^{p-1}-1)^{p^w}=X^{(p-1)p^w}-1=X^m-1. \]
    Hence $b_2(X)^{p^w-1}\mid n_2(X)-n_1(X)$ and $b_{\ell}^{p^w}(X)\mid n_2(X)-n_1(X)$ for all $\ell\in \{3,\ldots,p\}$.

    Applying Lemma~\ref{lem:key}, we obtain
    \[  |\varphi_{b_2(X)}(n_1)-\varphi_{b_2(X)}(n_2)|\le \frac{1}{p^{p^w-1}}=\frac{p}{N^{1/(p-1)}}. \]
    and 
    \[  |\varphi_{b_{\ell}(X)}(n_1)-\varphi_{b_{\ell}(X)}(n_2)|\le \frac{1}{p^{p^w}}=\frac{1}{N^{1/(p-1)}}, \]
    for all $\ell\in \{3,\ldots,p\}$.
    Moreover, a direct calculation gives $\varphi_{b_1(X)}(n_1)=1/p$ and 
    \[ \varphi_{b_1(X)}(n_2)=\sum_{j=1}^{m-1}\frac{p-1}{p^{j+1}}=\frac{1}{p}-\frac{1}{p^{m}}.\]
    Hence, $|\varphi_{b_1(X)}(n_1)-\varphi_{b_1(X)}(n_2)|=1/p^{m}=1/N\le 1/N^{1/(p-1)}$.
    Thus we have
    \[ q(P_N)\le \frac{1}{2}\|\bsx_{n_1}-\bsx_{n_2}\|\le \frac{\sqrt{p^2+p-1}}{2N^{1/(p-1)}}. \]
    Since this bound holds for any $w\in \NN$ and the corresponding $N$, the claim follows. 
\end{proof}

\begin{remark}
	The proof of the third case relies on the equality $d=p$; that is, our argument works only when all components of the $p$-dimensional Faure sequence in base $p$ are considered. This is also the case in \cite{DGS25}.
        
    Furthermore, our bound on $|\varphi_{b_{\ell}(X)}(n_1)-\varphi_{b_{\ell}(X)}(n_2)|$ for $\ell\in \{2,\ldots,p\}$ relies on the fact that $\bsx_{n_1}$ and $\bsx_{n_2}$ share the first $p^w-1$ or $p^w$ digits in their base $p$ expansions. From our proof, we can easily derive a generalization of \cite[Theorem~4.2]{DGS25}: let $L_2,L_3,\ldots,L_{p-1}$ be any non-singular lower triangular matrices over $\FF_p$, then the $(0,p)$-sequence constructed by $(I,L_2P,\ldots,L_pP^{p-1})\pmod{p}$ satisfies 
		\[ q(P_N)\lesssim \frac{1}{N^{1/(p-1)}},\]
	for $N=p^m$ with $m=(p-1)p^w$, $w\in\NN$. Note the first component is not changed. For the other components, observe if $x_{n_1,\ell}$ and $x_{n_2,\ell}$ of a digital sequence constructed by $C_{\ell}$ share the first $w$ digits in the base $p$ expansions, then the corresponding elements of a digital sequence constructed by $L_{\ell}C_{\ell}$ also share the first $w$ digits.
\end{remark}

In this section, we have considered three examples of multi-dimensional Halton-type sequences over $\FF_p$. The observation is that, for each example, there exist suitable pairs of indices such that the corresponding points are too close to each other, which disproves quasi-uniformity. One natural question is whether a general formula exists for these indices in terms of the base polynomials. We state this as an open problem.

\begin{op}\label{op2}
Prove or disprove that no Halton-type sequence is quasi-uniform for $d\ge 2$. If this is the case, find pairs of indices $n_1$ and $n_2$ for an arbitrary Halton-type sequence in given bases $b_1(X),\ldots,b_d(X)$ which disproves quasi-uniformity.
\end{op}

\bibliographystyle{amsplain} 
\bibliography{ref}

\end{document}